\newtheorem{thm}{Theorem}[section]
\newtheorem*{thm*}{Theorem}
\newtheorem{lem}[thm]{Lemma}
\newtheorem{prop}[thm]{Proposition}
\theoremstyle{definition} 
\newtheorem{defn}[thm]{Definition}
\newtheorem{ex}[thm]{Example}
\theoremstyle{remark} 
\newtheorem{rem}[thm]{Remark}
\numberwithin{equation}{section}
\newcommand{\thmref}[1]{Theorem~\textup{\ref{#1}}}
\newcommand{\lemref}[1]{Lemma~\textup{\ref{#1}}}
\newcommand{\propref}[1]{Proposition~\textup{\ref{#1}}}
\newcommand{\exref}[1]{Example~\textup{\ref{#1}}}
\newcommand{\KK}{\mathcal K}
\newcommand{\RR}{\mathcal R}
\newcommand{\F}{\mathbb F}
\renewcommand{\bar}{\overline}
\newcommand{\what}{\widehat}
\newcommand{\wilde}{\widetilde}
\newcommand{\<}{\langle}
\renewcommand{\>}{\rangle}
\newcommand{\ann}{^\perp}
\newcommand{\pann}{{}\ann}
\newcommand{\id}{\text{\textup{id}}}
\DeclareMathOperator*{\spn}{span}
\DeclareMathOperator*{\clspn}{\overline{\spn}}
\DeclareMathOperator{\ind}{Ind}
\DeclareMathOperator{\dashind}{\!-Ind}
\newcommand{\righttext}[1]{\quad\text{#1 }}
\newcommand{\cs}{\mathbf{C}^*}
\newcommand{\co}{\mathbf{Coact}}
\newcommand{\cp}{\textup{CP}}
\begin{document}
\title{Exact large ideals of $B(G)$ are downward directed}
\author[Kaliszewski]{S. Kaliszewski}
\address{School of Mathematical and Statistical Sciences
\\Arizona State University
\\Tempe, Arizona 85287}
\email{kaliszewski@asu.edu}
\author[Landstad]{Magnus~B. Landstad}
\address{Department of Mathematical Sciences\\
Norwegian University of Science and Technology\\
NO-7491 Trondheim, Norway}
\email{magnusla@math.ntnu.no}
\author[Quigg]{John Quigg}
\address{School of Mathematical and Statistical Sciences
\\Arizona State University
\\Tempe, Arizona 85287}
\email{quigg@asu.edu}

\date{December 16, 2015}

\subjclass[2000]{Primary  46L55; Secondary 46M15}
\keywords{
crossed product,
action,
coaction, 
Fourier-Stieltjes algebra,
exact sequence,
Morita compatible}

\begin{abstract}
We prove that if $E$ and $F$ are large ideals of $B(G)$ for which the associated coaction functors are exact, then the same is true for $E\cap F$.
We also give an example of a coaction functor whose restriction to the maximal coactions does not come from any large ideal.
\end{abstract}
\maketitle

\section{Introduction}\label{intro}

In \cite{BauGueWil} Baum, Guentner, and Willett,
striving to make advances in the Baum-Connes conjecture,
studied \emph{crossed-product functors} $\sigma$ that take an action $(A,\alpha)$ of a locally compact group $G$ to a $C^*$-algebra $A\rtimes_{\alpha,\sigma} G$ lying between the full and reduced crossed products.
It is particularly important to know when $\sigma$ is \emph{exact} in the sense that it preserves short exact sequences.
Motivated by this, in \cite{klqfunctor} we introduced \emph{coaction functors}, 
a certain type of functor on the category of coactions of $G$.
Every coaction functor gives rise to a crossed-product functor by composing with the full-crossed-product functor.
Among other things, we showed that if the coaction functor is exact then so is the associated crossed-product functor.
We paid particular attention to the coaction functors $\tau_E$ coming from \emph{large ideals} $E$ of the Fourier-Stieltjes algebra $B(G)$.
An obvious question is, ``For which large ideals $E$ is the coaction functor $\tau_E$ exact?''
In the current paper we will call $E$ \emph{exact} if $\tau_E$ is exact;
for example, $B(G)$ is exact, but the reduced Fourier-Stieltjes algebra $B_r(G)$ is exact if and only if $G$ is an exact group.
In \cite[Remark~6.23]{klqfunctor} we asked whether the intersection of two exact large ideals is exact,
and we mentioned that we had an idea of how to proceed, and promised to address the question in future work.
In the current paper we fulfill that promise in \thmref{intersect}.

In \cite{klqfunctor} we speculated that the proof would require a ``somewhat more elaborate version of Morita compatibility'', and that it would ``perhaps resemble the property that Buss, Echterhoff, and Willett
call \emph{correspondence functoriality} (see \cite[Theorem~4.9]{BusEchWil})''.
It transpires that we ended up doing something slightly different: rather than change our definition of Morita compatibility, we instead combine it with another concept from \cite{BusEchWil}, namely the \emph{ideal property}.

We also answer another question left open in \cite[Question~6.20]{klqfunctor}: there we asked whether every coaction functor, when restricted to the maximal coactions, is naturally isomorphic to one coming from a large ideal. In \exref{no E} we give a counterexample, stealing a trick from \cite{BusEchWil}.

We wish to thank the referee for suggestions that improved this paper.

\section{Preliminaries}\label{prelims}

We briefly recall a few definitions from \cite{klqfunctor}.
In the \emph{classical category} $\cs$ of $C^*$-algebras, the morphisms are homomorphisms between the $C^*$-algebras themselves, not involving multipliers,
and in the \emph{classical category} $\co$ of coactions the morphisms are morphisms in $\cs$ that are equivariant for the coactions.
Since we are interested in the classical category instead of the nondegenerate one (involving nondegenerate homomorphisms into multiplier algebras), we regard maximalization $(A,\delta)\mapsto (A^m,\delta^m)$ and normalization $(A,\delta)\mapsto (A^n,\delta^n)$ as functors on $\co$ (and we use the notation $\phi^m$ and $\phi^n$ for the respective images of a morphism $\phi$).

We assume that we have fixed once and for all a maximalization functor
$(A,\delta)\mapsto (A^m,\delta^m)$
and a normalization functor
$(A,\delta)\mapsto (A^n,\delta^n)$
on the classical category of coactions,
with canonical equivariant surjections $q^m_A:A^m\to A$
and $\Lambda_A:A\to A^n$.
Recall from \cite[Definition~4.1]{klqfunctor} that a \emph{coaction functor} is a functor $\tau$ on the classical category of coactions,
together with a natural transformation $q^\tau$ from maximalization to $\tau$ such that for each coaction $(A,\delta)$ the homomorphism $q^\tau_A:A^m\to A^\tau$ is surjective and has kernel contained in the kernel of the canonical map $\Lambda_{A^m}:A^m\to A^n$ (which is both a normalization of $(A^m,\delta^m)$ and a maximalization of $(A^n,\delta^n)$).
Maximalization, normalization, and the identity functor are all coaction functors.
There are other known coaction functors, determined by \emph{large ideals} of the Fourier-Stieltjes algebra $B(G)$ (see \cite[Section~6]{klqfunctor}).
Recall from \cite[Definition~3.1]{exotic} that we say an ideal $E$ of $B(G)$ is \emph{large} if it is weak* closed, $G$-invariant, and nonzero (in which case it will necessarily contain $B_r(G)$, by \cite[Lemma~3.14]{graded}).
In \exref{no E} we adapt a construction from \cite{BusEchWil} (who studied crossed-product functors defined on a category of actions) to define new coaction functors not of the preceding types.

In \cite[Definition~4.10]{klqfunctor} we defined a coaction functor to be \emph{exact} if it preserves short exact sequences.

Let $(A,\delta)$ and $(B,\epsilon)$ be coactions,
and let $(X,\zeta)$ be an $(A,\delta)-(B,\epsilon)$ imprimitivity-bimodule coaction.
\cite[Lemma~4.15]{klqfunctor} gives an
$(A^m,\delta^m)-(B^m,\epsilon^m)$ imprimitivity-bimodule coaction $(X^m,\zeta^m)$
such that
\[
X^m\dashind\ker q^m_B=\ker q^m_A
\]
(see \cite[Lemma~4.21]{klqfunctor} for the latter).
In \cite[Definition~4.16]{klqfunctor} we defined a coaction functor $\tau$ to be \emph{Morita compatible} if
for every $(X,\zeta)$ as above
we also have
\[
X^m\dashind\ker q^\tau_B=\ker q^\tau_A.
\]
Trivially, maximalization is Morita compatible,
and by \cite[Proposition~6.10]{klqfunctor} every coaction functor coming from a large ideal is Morita compatible.

Recall that in \cite[Definition~7.2]{klqproper}
we called a coaction $(A,\delta)$ of $G$
\emph{w-proper}
(and in \cite[Definition~5.1]{exotic} we used the term ``slice proper'')
if
$(\omega\otimes\id)\circ\delta(A)\subset C^*(G)$ for all $\omega\in A^*$.

If $(A,\delta)$ is a coaction, we call an ideal $I$ of $A$ \emph{strongly invariant} (see, e.g.,\cite[Definition~3.16]{klqfunctor} if
\[
\clspn\{\delta(I)(1\otimes C^*(G))\}=I\otimes C^*(G).
\]
Note that this is precisely what is needed for the restriction of $\delta$ to $I$ to be a coaction.

\section{Main result}\label{main}

We recall a few definitions
from \cite[Section~6]{klqfunctor}:
given any coaction $(A,\delta)$ and any large ideal $E$ of $B(G)$,
we define an ideal
\[
A_E=\{a\in A:E\cdot a=\{0\}\},
\]
and we write $A^E=A/A_E$ for the quotient $C^*$-algebra.
The quotient map $Q^E_A:A\to A_E$ is equivariant for $\delta$ and a coaction $\delta^E$ on $A^E$,
and $(A,\delta)\mapsto (A^E,\delta^E)$ is a coaction functor that we denote by $\tau_E$.

\begin{defn}
We call a large ideal $E$ of $B(G)$ \emph{exact} if the associated coaction functor $\tau_E$ is exact.
\end{defn}

We will prove that the set of exact large ideals of $B(G)$ is downward directed by showing that it is in fact closed under finite intersections. By induction, \thmref{intersect} below does the job.
It remains an open question whether the intersection of all exact large ideals of $B(G)$ is exact.

\begin{thm}\label{intersect}
The intersection of two exact large ideals of $B(G)$ is exact.
\end{thm}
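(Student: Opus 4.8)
The plan is to show that exactness of a large ideal is an ideal-theoretic ``saturation'' condition, to verify it for the weak*-closed sum $E+F$ essentially for free, and then to pass from $E+F$ down to $E\cap F$ by a Mayer--Vietoris/nine-lemma argument. Since $\tau_E$ sends a coaction $(A,\delta)$ to $A^E=A/A_E$, applying $\tau_E$ to a short exact sequence $0\to I\to A\to B\to 0$ of coactions (so $I$ is strongly invariant and $B=A/I$) gives a sequence that is exact on the left because $I_E=I\cap A_E$, and exact on the right because $A\to B$ is surjective; exactness in the middle is equivalent to the identity
\[
\{a\in A:E\cdot a\subseteq I\}=I+A_E .
\]
Thus $E$ is exact if and only if this holds for every strongly invariant ideal $I$ of every coaction.

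A direct attack on $E\cap F$ does not work: the hypothesis $(E\cap F)\cdot a\subseteq I$ says nothing about $E\cdot a$ or $F\cdot a$ individually, so the identities for $E$ and $F$ cannot be inserted. Instead I would route through $E+F$. Both the weak*-closed sum $E+F$ and $E\cap F$ are again large (each contains $B_r(G)$), and I would establish the two module identities
\[
A_{E+F}=A_E\cap A_F \midtext{and} A_{E\cap F}=\overline{A_E+A_F},
\]
the first being immediate from the definitions and the second resting on weak*-duality, with $A_E$ in the role of the pre-annihilator of $E$ and a bipolar argument turning intersection into closed sum. Granting these, exactness of $E+F$ follows at once from exactness of $E$ and $F$:
\[
\{a:(E+F)\cdot a\subseteq I\}=\{a:E\cdot a\subseteq I\}\cap\{a:F\cdot a\subseteq I\}=(I+A_E)\cap(I+A_F),
\]
and this last expression equals $I+(A_E\cap A_F)=I+A_{E+F}$ by the distributivity of the lattice of closed $\delta$-invariant ideals of a $C^*$-algebra. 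This distributivity is the one genuinely new ingredient that makes the sum manageable.

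With $E+F$ exact in hand I would form the natural short exact sequence of coactions
\[
0\to A^{E+F}\to A^E\oplus A^F\to A^{E\cap F}\to 0,
\]
in which the first map is $a\mapsto(a,a)$ and the second sends $(x,y)$ to the difference of their images in the common quotient $A^{E\cap F}$; its exactness as $C^*$-algebras is precisely the content of the two module identities. Feeding a short exact sequence $0\to I\to A\to B\to 0$ into the three functors $\tau_{E+F}$, $\tau_E\oplus\tau_F$, and $\tau_{E\cap F}$ produces a commuting $3\times3$ diagram whose columns are these Mayer--Vietoris sequences (hence exact) and whose first two rows are short exact---the first because $E+F$ is exact, the second because $E$ and $F$ are. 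The nine lemma then forces the third row to be short exact, which says exactly that $\tau_{E\cap F}$ preserves the original sequence.

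The formal diagram chase is painless; the real work, and the main obstacle, is to guarantee that the whole construction takes place in the classical category of coactions and is exact there. One must check that $A_E$ is strongly invariant, that the quotient and fibre-product coactions are compatible, that the Mayer--Vietoris maps are equivariant, and above all that the second module identity $A_{E\cap F}=\overline{A_E+A_F}$ holds. This is where I expect to invoke Morita compatibility together with the ideal property: the induced imprimitivity-bimodule coactions of \cite[Lemma~4.15]{klqfunctor} control the ideals $A_E$ (equivalently the kernels $q^{\tau_E}_A$) under induction, while the ideal property supplies the injectivity needed for the columns and for the left-exactness of each row. Establishing these two compatibilities simultaneously, rather than carrying out the distributivity computation, is where I expect the difficulty to concentrate.
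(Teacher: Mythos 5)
There is a genuine gap, and it sits exactly where you predicted the difficulty would concentrate. Your reduction of exactness-in-the-middle to the identity $\{a\in A:E\cdot a\subseteq I\}=I+A_E$ is correct, and distributivity of the lattice of closed ideals of a $C^*$-algebra is indeed available. But the whole construction hinges on the module identity $A_{E\cap F}=\overline{A_E+A_F}$, and the proposed ``bipolar argument'' does not apply: $A_E$ is not the pre-annihilator of $E$ in any duality pairing, because $A$ is an arbitrary $C^*$-algebra carrying a $B(G)$-module action, not the predual of $B(G)$. All one has in general is $(A_E)\ann=\clspn\{E\cdot A^*\}$ (weak* closure in $A^*$), so your identity amounts to weak*-density of $\spn\{(E\cap F)\cdot A^*\}$ in $\clspn\{E\cdot A^*\}\cap\clspn\{F\cdot A^*\}$ --- precisely the kind of density statement the paper singles out, in the remark following its proof, as something it cannot establish. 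The bipolar argument does work for $(C^*(G),\delta_G)$, where $A_E=\pann E$, but not beyond. A second, related gap: even the ``immediate'' identity $A_{\overline{E+F}}=A_E\cap A_F$, and the step identifying $\{a:\overline{E+F}\cdot a\subseteq I\}$ with the intersection of the corresponding sets for $E$ and $F$, require $f\mapsto f\cdot a$ to be weak*-to-weakly continuous (the sum $E+F$ of two weak*-closed ideals need not be weak*-closed); this continuity holds only for w-proper coactions, and your proposal never carries out a reduction to that case.

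The paper's route avoids $E+F$ and the nine lemma entirely. It replaces $E\cap F$ by the weak*-closed span $\<EF\>$ of products, which equals $E\cap F$ when $E$ or $F$ is exact (this is where exactness of the ideals enters in an essential, non-diagrammatic way, via \cite[Corollary~6.9]{klqfunctor}); it shows that on w-proper coactions $\tau_{\<EF\>}$ is naturally isomorphic to the composite $\tau_F\circ\tau_E$, which preserves short exact sequences because each factor does; and it then transfers exactness from w-proper coactions to all coactions using the facts that every coaction is Morita equivalent to a w-proper one and that $\tau$-exactness is a Morita invariant for Morita compatible functors with the ideal property. To salvage your approach you would need an independent proof of $A_{E\cap F}=\overline{A_E+A_F}$ valid for all coactions; note that the paper instead proves the analogous kernel identification only for $\<EF\>$ and only for w-proper $(A,\delta)$, and does all remaining work by Morita transport.
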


The key idea of our proof is
the following:
for two large ideals $E$ and $F$ of $B(G)$,
we compare the intersection $E\cap F$ to
the product. The following definition makes this precise:

\begin{defn}
For two large ideals $E,F\subset B(G)$ we write
$\<EF\>$ for the weak*-closed linear span of the set $EF$ of products.
\end{defn}

\begin{rem}
It is somewhat frustrating that we do not know of any examples of exact large ideals other than $B(G)$
(and, when $G$ is exact, $B_r(G)$).
Perhaps other examples could be found using techniques similar to those of
\cite[Section 5]{BauGueWil}.
\end{rem}

Note that $\<EF\>$ is a large ideal of $B(G)$ contained in the intersection $E\cap F$.
In \cite[Corollary~6.9]{klqfunctor} we showed that if $E$ or $F$ is exact then $\<EF\>=E\cap F$.
On the other hand, in \cite[proof of Proposition~8.4]{exotic} we observed that it follows from work of \cite{okayasu} that if $G$ is a noncommutative free group and $E_p$ is the weak*-closure in $B(G)$ of $\spn\{P(G)\cap L^p(G)\}$, where $P(G)$ denotes the set of positive type functions on $G$, then for for every $p>2$ we have
\[
\<E_p^2\>\subset E_{p/2}\subsetneq E_p.
\]
Note that in \cite[Section~8]{exotic}, $E_p$ was defined using $B(G)\cap L^p(G)$;
it now seems clear that this should be changed to $\spn\{P(G)\cap L^p(G)\}$ ---
see \cite[Proposition~2.13]{bew2}. We are grateful to Buss, Echterhoff, and Willett for pointing this out to us.

Another key idea in our strategy is to first do it for w-proper coactions.
Although w-properness is a quite strong hypothesis,
in some sense it is not:

\begin{lem}\label{morita}
Every coaction is Morita equivalent to a w-proper one.
\end{lem}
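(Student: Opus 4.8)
The plan is to exhibit, inside the Morita equivalence class of an arbitrary $(A,\delta)$, the \emph{stabilized} coaction on $A\otimes\KK(L^2(G))$ and to check that it is w-proper. Write $W$ for the regular corepresentation of $G$, regarded as a unitary multiplier of $\KK(L^2(G))\otimes C^*(G)$, and let $\ad W$ denote the resulting inner coaction $k\mapsto W(k\otimes1)W^*$ on $\KK(L^2(G))$. Because $\ad W$ is inner, the tensor product $\delta\otimes\ad W$ on $A\otimes\KK(L^2(G))$ is unambiguously defined on generators by
\[
(\delta\otimes\ad W)(a\otimes k)=W_{23}\,\bigl(\delta(a)_{13}\,(1\otimes k\otimes1)\bigr)\,W_{23}^{*},
\]
placing the $C^*(G)$-leg of $\delta$ in the third slot and conjugating the last two slots by $W$; the coaction identity for this map follows from the coaction identity for $\delta$ together with the corepresentation identity $(\id\otimes\delta_G)(W)=W_{12}W_{13}$, where $\delta_G$ is the comultiplication on $C^*(G)$. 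This is exactly the coaction that appears as the double dual in Katayama duality when $\delta$ is maximal; the content of the lemma is that it is both w-proper and Morita equivalent to $\delta$ for \emph{every} $\delta$.

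First I would dispose of the Morita equivalence. The algebra $\KK(L^2(G))$ is canonically isomorphic to $C_0(G)\rtimes_{\lt}G$, and under this isomorphism $\ad W$ is the dual coaction of the translation action $\lt$; in particular $(\KK(L^2(G)),\ad W)$ is Morita equivalent to the trivial coaction $(\mathbb{C},1)$ via the regular imprimitivity bimodule $L^2(G)$, equipped with the bimodule coaction implemented by $W$. Tensoring this $(\KK(L^2(G)),\ad W)$--$(\mathbb{C},1)$ imprimitivity-bimodule coaction with the identity bimodule on $(A,\delta)$ produces an $(A\otimes\KK(L^2(G)),\delta\otimes\ad W)$--$(A,\delta)$ imprimitivity-bimodule coaction on $A\otimes L^2(G)$. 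Hence $(A\otimes\KK(L^2(G)),\delta\otimes\ad W)$ is Morita equivalent to $(A,\delta)$, which is the conclusion we need once w-properness is established.

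The technical heart is w-properness of $\delta\otimes\ad W$. Fixing $\omega\in(A\otimes\KK(L^2(G)))^{*}$, I must show $(\omega\otimes\id)(\delta\otimes\ad W)(x)\in C^*(G)$, and by density and continuity it suffices to treat $x=a\otimes k$ and $\omega=\omega_A\otimes\omega_{\KK}$. Slicing the $A$-leg by $\omega_A$ turns the displayed generator into $W(k\otimes m)W^{*}$, where $m=(\omega_A\otimes\id)\delta(a)\in M(C^*(G))$, viewed in the multiplier algebra of $\KK(L^2(G))\otimes C^*(G)$. Factor $W(k\otimes m)W^{*}=\ad W(k)\cdot W(1\otimes m)W^{*}$. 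Since $\ad W$ is a dual coaction it enjoys w-properness in the strong form that $\ad W(\KK(L^2(G)))\subseteq\KK(L^2(G))\otimes C^*(G)$ as an honest spatial tensor product, so $\ad W(k)\in\KK(L^2(G))\otimes C^*(G)$; as the latter is an ideal in its multiplier algebra, the whole product again lies in $\KK(L^2(G))\otimes C^*(G)$. Slicing the remaining $\KK(L^2(G))$-leg by $\omega_{\KK}$ then lands in $C^*(G)$, as required.

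The main obstacle is twofold. On the technical side there is the leg bookkeeping in defining $\delta\otimes\ad W$ and verifying its coaction identity, and the clean construction of the bimodule coaction on $L^2(G)$ implementing $(\KK(L^2(G)),\ad W)\sim(\mathbb{C},1)$ in such a way that the tensored bimodule is genuinely an imprimitivity-bimodule coaction in the classical category. More essentially, the argument rests on the \emph{strong} w-properness of the dual coaction $\ad W$---that it maps into the honest tensor product $\KK(L^2(G))\otimes C^*(G)$ rather than merely into its multiplier algebra---since it is precisely this, together with the ideal property of $\KK(L^2(G))\otimes C^*(G)$, that forces the slice into $C^*(G)$. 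Once these are in hand, the remaining steps are routine.
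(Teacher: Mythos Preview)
Your argument is correct and reaches the same w-proper coaction as the paper---the stabilization $A\otimes\KK(L^2(G))$ with the twisted coaction $\delta\otimes\ad W$---but by a genuinely different and more elementary route. The paper's proof passes through the maximalization $(A^m,\delta^m)$, uses crossed-product duality to get the w-proper dual coaction on $A^m\rtimes_{\delta^m}G\rtimes_{\widehat{\delta^m}}G$ (invoking \cite[Corollary~7.8]{klqproper}), and then descends to a quotient Morita equivalent to $(A,\delta)$, invoking \cite[Proposition~5.3]{exotic} for the fact that w-properness passes to quotients. You bypass maximalization entirely: you write down $\delta\otimes\ad W$ directly, get the Morita equivalence by tensoring the standard $(\KK(L^2(G)),\ad W)\sim(\mathbb C,\text{triv})$ equivalence with $(A,\delta)$, and verify w-properness by a bare-hands slice computation using the ``strong'' w-properness $\ad W(\KK)\subset\KK\otimes C^*(G)$ of the inner coaction. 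Your approach is more self-contained; the paper's is shorter because it outsources the analytic content to cited lemmas.

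One point to tighten: your reduction ``by density and continuity'' to product functionals $\omega=\omega_A\otimes\omega_{\KK}$ is valid but not as innocent as it sounds. Weak$^*$-density of product functionals is not enough here (since $C^*(G)$ is not weak$^*$-closed in $C^*(G)^{**}$); you need norm density. That does hold: any state on $A\otimes\KK(L^2(G))$ is a vector state $\langle(\rho\otimes\id)(\cdot)\xi,\xi\rangle$ for some representation $\rho$ of $A$, and truncating $\xi=\sum_i\eta_i\otimes e_i$ to finitely many terms gives a norm approximation by finite sums of product functionals; the general functional case follows by Jordan decomposition. With this said, the slice map $\omega\mapsto(\omega\otimes\id)(y)$ is norm-to-norm continuous into $M(C^*(G))$, and $C^*(G)$ is norm-closed there, so your reduction goes through.
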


\begin{proof}
Let $(A,\delta)$ be a coaction,
with maximalization $(A^m,\delta^m)$.
Since $(A^m,\delta^m)$ is maximal,
the double crossed product gives
a 
coaction $(B,\epsilon)$,
an $A^m-B$ imprimitivity bimodule $X$,
and a $\delta^m-\epsilon$ compatible coaction $\zeta$ on $X$.
By \cite[Corollary~7.8]{klqproper}, $(B,\epsilon)$ is w-proper since it is a dual coaction.
Let $I$ be the kernel of the maximalization map $q^m_A:A^m\to A$,
and let $J$ be the ideal of $B$ induced via the imprimitivity bimodule $X$.
Since the imprimitivity bimodule $X$ is equivariant,
there is a coaction $\wilde\epsilon$ on the quotient $B/J$
such that the given coaction $(A,\delta)$ is Morita equivalent to $(B/J,\wilde\epsilon)$.
By \cite[Proposition~5.3]{exotic},
the coaction $\wilde\epsilon$ is w-proper.
\end{proof}

\begin{lem}\label{EF}
If $E$ and $F$ are large ideals of $B(G)$, 
then for every w-proper coaction $(A,\delta)$
there is a unique isomorphism $\theta_A$ making the diagram
\[
\xymatrix@C+20pt{
A \ar[r]^-{Q^E_A} \ar[d]_{Q^{\<EF\>}_A}
&A^E \ar[d]^{Q^F_{A^E}}
\\
A^{\<EF\>} \ar@{-->}[r]_-{\theta_A}^-\simeq
&(A^E)^F.
}
\]
commute.
\end{lem}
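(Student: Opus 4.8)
The plan is to reduce the statement to an equality of two ideals of $A$ and then to isolate precisely where w-properness is needed. Recall that the $B(G)$-module structure on a coaction is given by $f\cdot a=(\id\otimes f)\delta(a)$ for $f\in B(G)$ and $a\in A$, and that this is an action of the \emph{algebra} $B(G)$: a short computation with the coaction identity shows $(fg)\cdot a=f\cdot(g\cdot a)$, since the product in $B(G)=C^*(G)^*$ is defined through the comultiplication $\delta_G$. Because $Q^{\<EF\>}_A$ is surjective, any $\theta_A$ making the diagram commute is automatically unique, so the content of the lemma is that the down-then-right composite $Q^F_{A^E}\circ Q^E_A\colon A\to(A^E)^F$ and the downward map $Q^{\<EF\>}_A\colon A\to A^{\<EF\>}$ have the same kernel; for then the canonical isomorphism of quotient $C^*$-algebras supplies $\theta_A$, and it is equivariant because it is induced by the identity on $A$ through equivariant quotient maps.

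First I would compute the kernel of $Q^F_{A^E}\circ Q^E_A$. An element $a$ lies in it exactly when $Q^E_A(a)\in(A^E)_F$, i.e.\ when $g\cdot Q^E_A(a)=0$ for all $g\in F$. Equivariance of $Q^E_A$ gives $g\cdot Q^E_A(a)=Q^E_A(g\cdot a)$, so this says $g\cdot a\in A_E$, i.e.\ $f\cdot(g\cdot a)=0$ for all $f\in E$. By the module-algebra identity this is $(fg)\cdot a=0$ for all $f\in E$, $g\in F$, whence $\ker(Q^F_{A^E}\circ Q^E_A)=\{a\in A:(EF)\cdot a=\{0\}\}$. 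On the other side $\ker Q^{\<EF\>}_A=A_{\<EF\>}=\{a\in A:\<EF\>\cdot a=\{0\}\}$. Since $EF\subset\<EF\>$, the inclusion $A_{\<EF\>}\subset\{a:(EF)\cdot a=\{0\}\}$ is immediate (and already yields a well-defined surjection $\theta_A$), so the whole lemma comes down to the reverse implication: if $(EF)\cdot a=\{0\}$ then $\<EF\>\cdot a=\{0\}$.

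This last implication is the crux, and it is exactly here that w-properness enters. For fixed $a$ and any $\omega\in A^*$ we have $\omega(f\cdot a)=f\bigl((\omega\otimes\id)\delta(a)\bigr)$, and w-properness guarantees $(\omega\otimes\id)\delta(a)\in C^*(G)$; thus $f\mapsto\omega(f\cdot a)$ is evaluation of $f\in C^*(G)^*$ against a fixed element of $C^*(G)$, hence weak*-continuous. Consequently $f\mapsto f\cdot a$ is continuous from $B(G)$ with its weak* topology to $A$ with its weak topology. Now if $(EF)\cdot a=\{0\}$, then the linear span of $EF$ also annihilates $a$, and any $h\in\<EF\>$ is a weak* limit of a net from that span; applying weak*-to-weak continuity, $h\cdot a$ is a weak limit of zeros, so $h\cdot a=0$. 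This gives $A_{\<EF\>}=\{a:(EF)\cdot a=\{0\}\}=\ker(Q^F_{A^E}\circ Q^E_A)$, and the equality of kernels completes the proof.

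I expect the main obstacle to be the continuity step of the preceding paragraph. Without w-properness the slices $(\omega\otimes\id)\delta(a)$ lie only in $M(C^*(G))$, against which a weak* convergent net in $C^*(G)^*$ need not converge, so the passage from $EF$ to its weak*-closed span $\<EF\>$ can genuinely fail. The hypothesis of w-properness is precisely what pins the slices into $C^*(G)$ and rescues the argument; this is why the lemma is stated for w-proper coactions and why \lemref{morita} is used to reach them.
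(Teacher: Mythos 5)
Your proposal is correct and follows essentially the same route as the paper's proof: reduce to the equality of kernels, identify $\ker(Q^F_{A^E}\circ Q^E_A)$ with $\{a:(EF)\cdot a=\{0\}\}$, and use w-properness to get weak*-to-weak continuity of $f\mapsto f\cdot a$ so that annihilation by $EF$ passes to the weak*-closed span $\<EF\>$. Your write-up merely spells out the continuity step and the identity $(fg)\cdot a=f\cdot(g\cdot a)$ in more detail than the paper does.
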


\begin{proof}
We will show that
$\ker Q^F_{A^E}=A_{\<EF\>}/A_E$.
Since $\ker Q^{\<EF\>}_A=A_{\<EF\>}$, this will imply that
\[
\ker Q^{\<EF\>}_A=\ker Q^F_{A^E}\circ Q^E_A,
\]
and the result will follow.
For all $a+A_E\in A^E=A/A_E$
we have
$a+A_E\in \ker Q^F_{A^E}=(A^E)_F$ if and only if
$F\cdot (a+A_E)=\{A_E\}$,
equivalently $F\cdot a\subset A_E$,
equivalently $EF\cdot a=\{0\}$.
By definition,
$\<EF\>$ is the weak*-closed span of $EF$.
Since $\delta$ is w-proper,
the map
\[
f\mapsto f\cdot a:B(G)\to A
\]
is weak*-to-weakly continuous,
so
$EF\cdot a=\{0\}$ if and only if $\<EF\>\cdot a=\{0\}$, i.e., $a\in A_{\<EF\>}$.
Thus
\[
\ker Q^F_{A^E}=(A^E)_F=A_{\<EF\>}/A_E.
\qedhere
\]
\end{proof}

The following result almost shows that the $\theta_A$ of \lemref{EF} gives a natural isomorphism between the coaction functors $\tau_{\<EF\>}$ and $\tau_F\circ \tau_E$:

\begin{lem}\label{natural}
Let $E$ and $F$ be large ideals of $B(G)$.
Let $(A,\delta)$ and $(B,\epsilon)$ be w-proper coactions,
and let $\psi:A\to B$ be a $\delta-\epsilon$ equivariant homomorphism.
Then the diagram
\begin{equation}\label{natural diagram}
\xymatrix@C+30pt{
A^{\<EF\>} \ar[r]^-{\psi^{\<EF\>}} \ar[d]_{\theta_A}
&B^{\<EF\>} \ar[d]^{\theta_B}
\\
(A^E)^F \ar[r]_-{(\psi^E)^F}
&(B^E)^F
}
\end{equation}
commutes
equivariantly for the appropriate coactions.
\end{lem}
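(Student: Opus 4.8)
The plan is to reduce the claim to a formal diagram chase among quotient maps, using that each of $\psi^{\<EF\>}$, $\psi^E$, and $(\psi^E)^F$ is the homomorphism induced by $\psi$ (respectively $\psi^E$) on the relevant quotient, together with the defining property of the isomorphisms $\theta_A$ and $\theta_B$ supplied by \lemref{EF}. I will first establish commutativity in the classical category $\cs$ and then observe that every arrow is equivariant.

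First I would record the naturality of the quotient maps. Since $\psi$ is equivariant it intertwines the $B(G)$-module structures, so it carries $A_E$ into $B_E$ and $A_{\<EF\>}$ into $B_{\<EF\>}$; likewise $\psi^E$ is equivariant and carries $(A^E)_F$ into $(B^E)_F$. The induced maps on quotients therefore satisfy
\[
\psi^E\circ Q^E_A=Q^E_B\circ\psi,
\qquad
\psi^{\<EF\>}\circ Q^{\<EF\>}_A=Q^{\<EF\>}_B\circ\psi,
\]
\[
(\psi^E)^F\circ Q^F_{A^E}=Q^F_{B^E}\circ\psi^E.
\]
Applying \lemref{EF} to the two w-proper coactions $(A,\delta)$ and $(B,\epsilon)$ gives
\[
\theta_A\circ Q^{\<EF\>}_A=Q^F_{A^E}\circ Q^E_A,
\qquad
\theta_B\circ Q^{\<EF\>}_B=Q^F_{B^E}\circ Q^E_B.
\]
This is the only point at which the w-proper hypothesis is used, since it is what guarantees that $\theta_A$ and $\theta_B$ exist at all.

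Next, because $Q^{\<EF\>}_A$ is surjective, it suffices to check that the two composites around \eqref{natural diagram} agree after precomposition with $Q^{\<EF\>}_A$. On one hand,
\begin{align*}
\theta_B\circ\psi^{\<EF\>}\circ Q^{\<EF\>}_A
&=\theta_B\circ Q^{\<EF\>}_B\circ\psi\\
&=Q^F_{B^E}\circ Q^E_B\circ\psi,
\end{align*}
and on the other hand,
\begin{align*}
(\psi^E)^F\circ\theta_A\circ Q^{\<EF\>}_A
&=(\psi^E)^F\circ Q^F_{A^E}\circ Q^E_A\\
&=Q^F_{B^E}\circ\psi^E\circ Q^E_A\\
&=Q^F_{B^E}\circ Q^E_B\circ\psi.
\end{align*}
Since both composites equal $Q^F_{B^E}\circ Q^E_B\circ\psi$ and $Q^{\<EF\>}_A$ is surjective, it follows that $\theta_B\circ\psi^{\<EF\>}=(\psi^E)^F\circ\theta_A$, which is exactly the commutativity of \eqref{natural diagram}.

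Finally I would verify that the diagram commutes \emph{equivariantly}. The maps $\psi^{\<EF\>}$ and $(\psi^E)^F$ are equivariant as images of equivariant homomorphisms under the coaction functors $\tau_{\<EF\>}$ and $\tau_F$, and $\theta_A$, $\theta_B$ are equivariant because each is determined, through the equivariant surjection $Q^{\<EF\>}_A$ (respectively $Q^{\<EF\>}_B$), by equivariant maps. Hence \eqref{natural diagram} is a diagram of coaction morphisms, and the equality proved in $\cs$ is automatically equivariant. I do not anticipate a genuine obstacle: the argument is purely formal, the one thing to watch being the bookkeeping of which functor is applied to which morphism. The substantive comparison of $\<EF\>$ with the iterated quotient has already been carried out in \lemref{EF}; the present lemma merely checks that that comparison is natural in $\psi$.
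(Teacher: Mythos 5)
Your proposal is correct and follows essentially the same route as the paper: the paper's proof assembles the same identities into one large diagram (the outer square being \eqref{natural diagram}, with the inner quadrilaterals commuting by \lemref{EF} and functoriality) and likewise concludes by surjectivity of $Q^{\<EF\>}_A$, deducing equivariance of $\theta_A$ and $\theta_B$ from that of the remaining maps. Your algebraic chain of equalities is just the unrolled version of that diagram chase.
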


\begin{proof}
Equation~
\eqref{natural diagram} is the outer square of the following diagram:
\[
\xymatrix@C+30pt{
A^{\<EF\>} \ar[rrr]^-{\psi^{\<EF\>}} \ar[ddd]_{\theta_A}
&&&B^{\<EF\>} \ar[ddd]^{\theta_B}
\\
&A \ar[r]^-\psi \ar[d]_{Q^E_A} \ar[ul]^{Q^{\<EF\>}_A}
&B \ar[d]^{Q^E_B} \ar[ur]_{Q^{\<EF\>}_B}
\\
&A^E \ar[r]_-{\psi^E} \ar[dl]_{Q^F_{A^E}}
&B^E \ar[dr]^{Q^F_{B^E}}
\\
(A^E)^F \ar[rrr]_-{(\psi^E)^F}
&&&(B^E)^F.
}
\]
The left and right quadrilaterals commute by \lemref{EF}.
The top, middle, and bottom quadrilaterals commute by functoriality.
Since $Q^{\<EF\>}_A$ is surjective, it follows that the outer square commutes.
Since all maps except possibly for $\theta_A$ and $\theta_B$ are equivariant for appropriate
coactions, the isomorphisms $\theta_A$ and $\theta_B$ are also equivariant.
\end{proof}

\begin{defn}
Let $\tau$ be a coaction functor.
We say that a coaction $(A,\delta)$ is \emph{$\tau$-exact} if for every strongly $\delta$-invariant ideal $I$ of $A$ the sequence
\[
\xymatrix{
0\ar[r]
&I^\tau \ar[r]
&A^\tau \ar[r]
&(A/I)^\tau \ar[r]
&0
}
\]
is exact.
\end{defn}
Thus, a coaction functor $\tau$ is exact if and only if every coaction is $\tau$-exact.

\begin{lem}\label{w-proper}
If $E$ and $F$ are exact large ideals of $B(G)$, then every w-proper coaction is $\tau_{\<EF\>}$-exact.
\end{lem}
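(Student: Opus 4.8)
The plan is to exploit the factorization of $\tau_{\<EF\>}$ as the composite $\tau_F\circ\tau_E$ on w-proper coactions, which is precisely what \lemref{EF} and \lemref{natural} supply, and then to feed the given short exact sequence through the two exactness hypotheses one after the other. Fix a w-proper coaction $(A,\delta)$ and a strongly $\delta$-invariant ideal $I$, giving the short exact sequence $0\to I\to A\to A/I\to 0$ of coactions.

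First I would record a preliminary observation needed before the lemmas can be invoked at every term: if $(A,\delta)$ is w-proper and $I$ is strongly invariant, then both $(I,\delta|_I)$ and the quotient coaction on $A/I$ are again w-proper. For the ideal this is immediate, since every functional on $I$ extends to $A$ by Hahn--Banach and the defining condition $(\omega\otimes\id)\circ\delta(A)\subset C^*(G)$ is inherited. For the quotient, every functional on $A/I$ pulls back along $Q\colon A\to A/I$ to a functional on $A$, and the intertwining identity $\delta^{A/I}\circ Q=(Q\otimes\id)\circ\delta$ shows that the slices of the quotient coaction again land in $C^*(G)$. Thus all three terms are w-proper, so \lemref{EF} and \lemref{natural} apply at each of them.

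Next I would run the two exactness hypotheses in succession. Applying the exact functor $\tau_E$ to $0\to I\to A\to A/I\to 0$ yields an exact sequence $0\to I^E\to A^E\to (A/I)^E\to 0$. Writing $J$ for the kernel of the equivariant surjection $A^E\to (A/I)^E$, exactness at $A^E$ identifies $J$ with the image of $I^E$, so that the injection $I^E\to A^E$ carries $I^E$ isomorphically and equivariantly onto $J$, while the surjection gives an equivariant isomorphism $A^E/J\cong (A/I)^E$. Since $J$ is the kernel of an equivariant surjection of coactions it is strongly invariant, so the hypothesis that $F$ is exact --- that is, that $(A^E,\delta^E)$ is $\tau_F$-exact for the ideal $J$ --- gives an exact sequence $0\to J^F\to (A^E)^F\to (A^E/J)^F\to 0$. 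Transporting along the identifications $J\cong I^E$ and $A^E/J\cong (A/I)^E$ turns this into the exact sequence
\[
0\to (I^E)^F\to (A^E)^F\to ((A/I)^E)^F\to 0.
\]

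Finally I would transport this back to $\tau_{\<EF\>}$. By \lemref{natural}, applied to the equivariant inclusion $I\to A$ and quotient map $A\to A/I$, the isomorphisms $\theta_I$, $\theta_A$, $\theta_{A/I}$ of \lemref{EF} assemble into a commuting ladder between the sequence $0\to I^{\<EF\>}\to A^{\<EF\>}\to (A/I)^{\<EF\>}\to 0$ and the sequence just shown to be exact; as all three vertical maps are isomorphisms, the former is exact as well, which is exactly the assertion that $(A,\delta)$ is $\tau_{\<EF\>}$-exact. The main obstacle I anticipate is the bookkeeping in the middle step: one must verify that the kernel $J$ really is strongly invariant and that the coaction isomorphisms $J\cong I^E$ and $A^E/J\cong (A/I)^E$ are genuinely equivariant, so that $\tau_F$-exactness can legitimately be applied and so that the resulting sequence is the honest $\tau_F$-image of the $\tau_E$-sequence rather than merely abstractly isomorphic to it.
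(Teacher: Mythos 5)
Your proposal is correct and follows essentially the same route as the paper: apply $\tau_E$-exactness and then $\tau_F$-exactness to the short exact sequence, and use the natural isomorphism $\tau_{\<EF\>}\cong\tau_F\circ\tau_E$ on w-proper coactions from Lemmas~\ref{EF} and~\ref{natural} to transport exactness back. Your extra bookkeeping (w-properness of $I$ and $A/I$, strong invariance of the kernel $J$ so that $F$-exactness legitimately applies) is left implicit in the paper's proof but is a sound addition rather than a deviation.
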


\begin{proof}
Let $(A,\delta)$ be a w-proper coaction, and let $I$ be a strongly $\delta$-invariant ideal of $A$.
Then we have an equivariant short exact sequence
\[
\xymatrix{
0\ar[r]
&I\ar[r]^{\phi}
&A\ar[r]^\psi
&B\ar[r]
&0,
}
\]
where $\phi$ is the inclusion, $B=A/I$, and $\psi$ is the quotient map.
We must show that 
the sequence
\begin{equation}\label{D seq}
\xymatrix{
0\ar[r]
&I^{\<EF\>}\ar[r]^{\phi^{\<EF\>}}
&A^{\<EF\>}\ar[r]^{\psi^{\<EF\>}}
&B^{\<EF\>}\ar[r]
&0
}
\end{equation}
is exact.

Since $E$ is exact,
the sequence
\[
\xymatrix{
0\ar[r]
&I^E\ar[r]^{\phi^E}
&A^E\ar[r]^{\psi^E}
&B^E\ar[r]
&0
}
\]
is exact.
Then since $F$ is exact,
the sequence
\[
\xymatrix@C+20pt{
0\ar[r]
&(I^E)^F\ar[r]^{(\phi^E)^F}
&(A^E)^F\ar[r]^{(\psi^E)^F}
&(B^E)^F\ar[r]
&0
}
\]
is exact.

By \lemref{natural}, we have an isomorphism
\[
\xymatrix@C+20pt@R+10pt{
0\ar[r]
&I^{\<EF\>}\ar[r]^{\phi^{\<EF\>}} \ar[d]_{\theta_I}^\simeq
&A^{\<EF\>}\ar[r]^{\psi^{\<EF\>}} \ar[d]_{\theta_A}^\simeq
&B^{\<EF\>}\ar[r] \ar[d]^{\theta_B}_\simeq
&0
\\
0\ar[r]
&(I^E)^F\ar[r]_{(\phi^E)^F}
&(A^E)^F\ar[r]_{(\psi^E)^F}
&(B^E)^F\ar[r]
&0
}
\]
of sequences,
so the top sequence is exact since the bottom one is.
\end{proof}

The following is adapted from \cite[Definition~3.1]{BusEchWil}.

\begin{defn}
We say a coaction functor $\tau$ has the \emph{ideal property}
if for every coaction $(A,\delta)$ and
every strongly $\delta$-invariant ideal $I$ of $A$,
letting
$\iota:I\hookrightarrow A$ denote the inclusion map, the induced map
\[
\iota^\tau:I^\tau\to A^\tau
\]
is injective.
\end{defn}

Note that in the above definition, if $\tau$ has the ideal property then the image of $I^\tau$ in $A^\tau$ will be a strongly $\delta^\tau$-invariant ideal,
and we will identify $I^\tau$ with this image, regarding it as an ideal of $A^\tau$.

\cite[Remark~3.4]{BusEchWil} says that the ideal property holds for every crossed-product functor coming from a large ideal. This also follows from the following lemma.

\begin{lem}\label{E ideal property}
For every large ideal $E$ of $B(G)$ the coaction functor $\tau_E$ has the ideal property.
\end{lem}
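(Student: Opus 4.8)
The plan is to reduce the injectivity of $\iota^E$ to the purely set-theoretic identity $I_E = I\cap A_E$ of ideals in $A$. Recall that $\tau_E$ sends $(A,\delta)$ to the quotient $A^E=A/A_E$ with quotient map $Q^E_A$, and that by functoriality the induced map $\iota^E$ is determined by $\iota^E\circ Q^E_I=Q^E_A\circ\iota$. Since $Q^E_I$ is surjective, $\iota^E$ is injective precisely when $\ker(Q^E_A\circ\iota)=\ker Q^E_I=I_E$. As $\ker(Q^E_A\circ\iota)=\{x\in I:x\in A_E\}=I\cap A_E$, everything comes down to showing $I_E=I\cap A_E$.

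First I would dispose of the trivial inclusion: if $x\in I_E$ then $x\in I$ and $E\cdot x=\{0\}$, so $x\in A_E$ and hence $I_E\subset I\cap A_E$. The real content is the reverse inclusion, and the only thing that needs checking is that the $B(G)$-module action on $I$ coming from the restricted coaction $\delta|_I$ agrees with the action inherited from $A$. Here I am using that the module structure is given by slicing, $f\cdot a=(\id\otimes f)\delta(a)$ for $f\in B(G)$.

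To establish this compatibility I would invoke strong invariance of $I$: it guarantees that $\delta$ restricts to a coaction $\delta|_I$ on $I$, with $\delta|_I(x)=\delta(x)$ for $x\in I$ sitting inside $M(A\otimes C^*(G))$ via the ideal inclusion $I\otimes C^*(G)\hookrightarrow A\otimes C^*(G)$. Slicing by $f$ then lands in $I$ and commutes with this inclusion, so the element $f\cdot x$ computed via $\delta|_I$ coincides, as an element of $I\subset A$, with $f\cdot x$ computed via $\delta$. Consequently $E\cdot x=\{0\}$ in $I$ if and only if $E\cdot x=\{0\}$ in $A$, which yields $I\cap A_E\subset I_E$ and completes the argument. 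I do not expect a genuine obstacle here; the one point deserving care (rather than difficulty) is precisely the compatibility of the two slicings under the ideal inclusion $\iota\otimes\id$, which is where strong invariance earns its keep.
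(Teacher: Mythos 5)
Your argument is correct, and it matches the substance of the paper's proof: the paper simply cites \cite[Proof of Proposition~6.7]{klqfunctor}, where the same identity $I_E=I\cap A_E$ (there, Equation~(6.4)) is shown to hold automatically via the compatibility of the $B(G)$-module actions on $I$ and on $A$, which is exactly what you verify directly using strong invariance.
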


\begin{proof}
This follows from \cite[Proof of Proposition~6.7]{klqfunctor}, where it is shown that \cite[Equation~(6.4)]{klqfunctor} holds automatically.
\end{proof}

\begin{rem}
Every exact coaction functor has the ideal property, but normalization is a coaction functor that is not exact but nevertheless has the ideal property.
We do not know an example of a decreasing coaction functor that is Morita compatible and does not have the ideal property.
\end{rem}

\begin{prop}\label{morita preserves}
Let $\tau$ be a Morita compatible coaction functor with the ideal property,
and let $(A,\delta)$ and $(B,\epsilon)$ be Morita equivalent coactions.
Then $(A,\delta)$ is $\tau$-exact if and only if $(B,\epsilon)$ is.
\end{prop}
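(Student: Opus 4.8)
The plan is to reduce to a single implication by symmetry and then transport exactness across the Morita equivalence by means of the Rieffel correspondence, using Morita compatibility to push that correspondence through $\tau$ and the ideal property to make sense of the terms $I^\tau$ as genuine ideals. By symmetry it suffices to assume that $(B,\epsilon)$ is $\tau$-exact and deduce the same for $(A,\delta)$. Fix an $(A,\delta)-(B,\epsilon)$ imprimitivity-bimodule coaction $(X,\zeta)$ and a strongly $\delta$-invariant ideal $I$ of $A$. Because $(X,\zeta)$ is equivariant, the Rieffel correspondence carries $I$ to a strongly $\epsilon$-invariant ideal $J$ of $B$, the submodule $Y=\clspn\{I\cdot X\}$ is an equivariant $I-J$ imprimitivity bimodule, and the quotient $X/Y$ is an equivariant $(A/I)-(B/J)$ imprimitivity bimodule. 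Writing $\psi:A\to A/I$ and $\psi_B:B\to B/J$ for the quotient maps, I first note that the only nontrivial part of the target sequence $0\to I^\tau\to A^\tau\to (A/I)^\tau\to 0$ is exactness in the middle: injectivity of $\iota^\tau$ is precisely the ideal property, and surjectivity of $\psi^\tau$ is automatic, since by naturality of $q^\tau$ we have $\psi^\tau\circ q^\tau_A=q^\tau_{A/I}\circ\psi^m$, a composite of surjections.

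Next I would apply $\tau$, together with Morita compatibility, to each of the three equivariant imprimitivity bimodules $Y$, $X$, and $X/Y$. This produces an $A^\tau-B^\tau$ imprimitivity bimodule $X^\tau$, an $I^\tau-J^\tau$ imprimitivity bimodule $Y^\tau$, and an $(A/I)^\tau-(B/J)^\tau$ imprimitivity bimodule $(X/Y)^\tau$. Invoking the ideal property, I identify $I^\tau$ and $J^\tau$ with strongly invariant ideals of $A^\tau$ and $B^\tau$. The crux is then the claim that, under the Rieffel correspondence determined by $X^\tau$, the ideal $I^\tau\subset A^\tau$ corresponds to $J^\tau\subset B^\tau$, while $\ker\psi^\tau\subset A^\tau$ corresponds to $\ker\psi_B^\tau\subset B^\tau$. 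Granting this, the argument finishes quickly: $\tau$-exactness of $(B,\epsilon)$ gives the equality $J^\tau=\ker\psi_B^\tau$ of ideals of $B^\tau$, and since the Rieffel correspondence is a bijection of ideal lattices it follows that $I^\tau=\ker\psi^\tau$, which is exactly exactness in the middle for $(A,\delta)$.

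The main obstacle is the correspondence claim of the previous paragraph: I must show that the $\tau$-images $Y^\tau$ and $(X/Y)^\tau$ are, respectively, the sub-bimodule and the quotient-bimodule of $X^\tau$ that implement these two Rieffel correspondences. I expect to extract this from Morita compatibility applied simultaneously to the inclusion $Y\hookrightarrow X$ and the quotient $X\to X/Y$: at the level of maximalizations these assemble into a commuting ladder of imprimitivity-bimodule morphisms, and the induction identities $Y^m\dashind\ker q^\tau_J=\ker q^\tau_I$ together with its analogue for $X/Y$ should force the induced sub- and quotient-bimodules of $X^\tau$ to coincide with $Y^\tau$ and $(X/Y)^\tau$. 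Carrying out this bookkeeping carefully, tracking the equivariant Rieffel correspondence first through maximalization and then through the $\tau$-quotient, is where the real work lies; everything else is formal.
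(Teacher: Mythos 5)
Your proposal follows essentially the same route as the paper's proof: reduce to one implication by symmetry, dispose of injectivity (the ideal property) and surjectivity (naturality of $q^\tau$ plus exactness of maximalization), and obtain exactness in the middle by showing that the Rieffel correspondence for $X^\tau$ matches $I^\tau$ with $J^\tau$ and $\ker\psi_A^\tau$ with $\ker\psi_B^\tau$, then citing $\tau$-exactness of $(B,\epsilon)$. The step you defer as ``where the real work lies'' is precisely where the paper concentrates its effort: it constructs a surjective imprimitivity-bimodule homomorphism $\psi^\tau_X\colon X^\tau\to (X/Y)^\tau$ with coefficient maps $\psi^\tau_A,\psi^\tau_B$ (well defined because Morita compatibility gives $\ker q^\tau_X=X^m\cdot\ker q^\tau_B$, so Cohen--Hewitt factorization applies), deduces $\ker\psi^\tau_A=X^\tau\dashind\ker\psi^\tau_B$ from \cite[Lemma~1.20]{enchilada}, and obtains $I^\tau=X^\tau\dashind J^\tau$ by pushing $I^m=X^m\dashind J^m$ through the surjection $q^\tau_X$ rather than by realizing $Y^\tau$ as a sub-bimodule.
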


\begin{proof}
Let $X$ be an equivariant $A-B$ imprimitivity bimodule.
Without loss of generality assume that $(B,\epsilon)$ is $\tau$-exact,
let $I$ be an invariant ideal of $A$,
let $J$ be the ideal of $B$ corresponding to $I$ via $X$,
and let
\begin{align*}
\psi_A&:A\to A/I
\\
\psi_B&:B\to B/J
\end{align*}
be the quotient maps.
We know that $\psi_B^\tau:B^\tau\to (B/J)^\tau$ is surjective and has kernel $J^\tau$, because we are assuming that $(B,\epsilon)$ is $\tau$-exact.

Since maximalization is an exact coaction functor \cite[Theorem~4.11]{klqfunctor}, the homomorphism
\[
\psi_A^m:A^m\to (A/I)^m
\]
is surjective.
Since $q^\tau$ is a natural transformation from maximalization to $\tau$,
the diagram
\[
\xymatrix@C+30pt@R+20pt{
A^m \ar[r]^-{\psi_A^m} \ar[d]_{q^\tau_A}
&(A/I)^m \ar[d]^{q^\tau_{A/I}}
\\
A^\tau \ar[r]_{\psi^\tau_A}
&(A/I)^\tau
}
\]
commutes.
Thus $\psi^\tau_A$ is surjective.
Since $\tau$ has the ideal property,
$I^\tau$ is an ideal of $A^\tau$.
For $\tau$-exactness of $(A,\delta)$, it remains to show that $I^\tau=\ker\psi^\tau_A$.

Since $\tau$ is Morita compatible, by \cite[Lemma~4.19]{klqfunctor} we have an equivariant $A^\tau-B^\tau$ imprimitivity bimodule $X^\tau$ and a surjective $q^\tau_A-q^\tau_B$ compatible imprimitivity-bimodule homomorphism $q^\tau_X:X^m\to X^\tau$, where $X^m$ is the equivariant $A^m-B^m$ imprimitivity bimodule of \cite[Lemma~4.14]{klqfunctor}.
(Note that \cite[Lemma~4.19]{klqfunctor} did not explicitly mention surjectivity of $q^\tau_X$, but this surjectivity follows from that of $q^\tau_A$ and $q^\tau_B$.)
We visualize this using the diagram
\[
\xymatrix@R+20pt{
A^m \ar@{-}[r] \ar[d]_{q^\tau_A}
&X^m \ar@{-}[r] \ar[d]_{q^\tau_X}
&B^m \ar[d]_{q^\tau_B}
\\
A^\tau \ar@{-}[r]
&X^\tau \ar@{-}[r]
&B^\tau
}
\]
Similarly for $q^\tau_{X/Y}:(X/Y)^m\to (X/Y)^\tau$.

Consider the diagram
\begin{equation}\label{big diagram}
\xymatrix@R+30pt{
A^m \ar@{-}[r] \ar[dd]_{q^\tau_A} \ar[drrr]|{\psi_A^m}
&X^m \ar@{-}[r] \ar[dd]_{q^\tau_X} \ar[drrr]|{\psi_X^m}
&B^m \ar[dd]_{q^\tau_B} \ar[drrr]|{\psi_B^m}
\\
&&&(A/I)^m \ar@{-}[r] \ar[dd]_(.3){q^\tau_{A/I}}
&(X/Y)^m \ar@{-}[r] \ar[dd]_(.3){q^\tau_{X/Y}}
&(B/J)^m \ar[dd]_(.3){q^\tau_{B/J}}
\\
A^\tau \ar@{-}[r] \ar[drrr]|{\psi_A^\tau}
&X^\tau \ar@{-}[r] \ar@{-->}[drrr]|{\psi_X^\tau}
&B^\tau \ar[drrr]|{\psi_B^\tau}
\\
&&&(A/I)^\tau \ar@{-}[r]
&(X/Y)^\tau \ar@{-}[r]
&(B/J)^\tau.
}
\end{equation}
Claim:
there is an imprimitivity-bimodule homomorphism $\psi^\tau_X$ as \eqref{big diagram} indicates, with coefficient homomorphisms $\psi^\tau_A$ and $\psi^\tau_B$.
To get a \emph{linear map} $\psi^\tau_X$ such that the diagram
\[
\xymatrix{
X^m \ar[r]^-{\psi_X^m} \ar[d]_{q^\tau_X}
&(X/Y)^m \ar[d]^{q^\tau_{X/Y}}
\\
X^\tau \ar@{-->}[r]_-{\psi^\tau_X}
&(X/Y)^\tau
}
\]
commutes,
it suffices to show that $\ker q^\tau_X\subset \ker q^\tau_{X/Y}\circ \psi^m_\tau$.
Suppose $x\in \ker q^\tau_X$.
Since $\ker q^\tau_X=X^m\cdot \ker q^\tau_B$,
by the Cohen-Hewitt factorization theorem
we can factor $x=x'\cdot b$,
where $b\in \ker q^\tau_B$.
Then
\begin{align*}
&q^\tau_{X/Y}\circ \psi_X^m(x)
\\&\quad=q^\tau_{X/Y}\circ \psi_X^m(x'\cdot b)
\\&\quad=q^\tau_{A/I}(x')\cdot q^\tau_{B/J}\circ \psi^m_B(b)
\\&\hspace{.5in}\text{(since $q^\tau_{X/Y}$ and $\psi^m_X$ are imprimitivity-bimodule homomorphisms)}
\\&\quad=q^\tau_{A/I}(x')\cdot \psi^\tau_B\circ q^\tau_B(b)
\righttext{(by naturality of $q^\tau$)}
\\&\quad=0,
\end{align*}
as desired.
The computations required to verify that the linear map $\psi^\tau_X$ is an imprimitivity-bimodule homomorphism are routine:
for the right-module structures,
let $x\in X^\tau$ and $b\in B^\tau$.
By surjectivity we can write 
$x=q^\tau_X(x')$ and $b=q^\tau_B(b')$
with $x'\in X^m$ and $b'\in B^m$, and then
\begin{align*}
\psi^\tau_X(x\cdot b)
&=\psi^\tau_X\bigl(q^\tau_X(x')\cdot q^\tau_B(b')\bigr)
\\&=\psi^\tau_X\circ q^\tau_X(x'\cdot b')
\\&=q^\tau_{X/Y}\circ \psi^m_X(x'\cdot b')
\\&\overset{*}=q^\tau_{X/Y}\circ \psi^m_X(x')\cdot q^\tau_{B/J}\circ \psi^m_B(b')
\\&=\psi^\tau_X\circ q^\tau_X(x')\cdot \psi^\tau_B\circ q^\tau_B(b')
\\&=\psi^\tau_X(x)\cdot \psi^\tau_B(b),
\end{align*}
where the equality at $*$ follows since $q^\tau_{X/Y}$ and $\psi^m_X$ are imprimitivity-bimodule homomorphisms.
Similarly for the left-module structures.
For the right-hand inner products,
let $x,y\in X^\tau$.
Factor $x=q^\tau_X(x')$ and $y=q^\tau_X(y')$ with $x',y'\in X^m$.
Then
\begin{align*}
\psi^\tau_A\bigl({}_{A^\tau}\<x,y\>\bigr)
&=\psi^\tau_A\bigl({}_{A^\tau}\<q^\tau_X(x'),q^\tau_X(y')\>\bigr)
\\&=\psi^\tau_A\circ q^\tau_A\bigl({}_{A^m}\<x',y'\>\bigr)
\\&=q^\tau_{A/I}\circ \psi^m_A\bigl({}_{A^m}\<x',y'\>\bigr)
\\&={}_{(A/I)^\tau}\bigl\<q^\tau_{X/Y}\circ \psi^m_X(x'),q^\tau_{X/Y}\circ \psi^m_X(y')\bigr\>
\\&={}_{(A/I)^\tau}\bigl\<\psi^\tau_X\circ q^\tau_X(x'),\psi^\tau_X\circ q^\tau_X(y')\bigr\>
\\&={}_{(A/I)^\tau}\bigl\<\psi^\tau_X(x),\psi^\tau_X(y)\bigr\>,
\end{align*}
and similarly for the right-hand inner products.
Thus $\psi^\tau_X$ is an imprimitivity-bimodule homomorphism
with coefficient homomorphisms $\psi^\tau_A$ and $\psi^\tau_B$, proving the claim.

It now follows from \cite[Lemma~1.20]{enchilada} that
\[
\ker \psi^\tau_A=X^\tau-\ind \ker \psi^\tau_B=X^\tau-\ind J^\tau.
\]
Since we also have the imprimitivity-bimodule homomorphism $\psi^m_X$
with coefficient homomorphisms $\psi^m_A$ and $\psi^m_B$,
and since maximalization is an exact coaction functor,
\[
I^m=\ker \psi^m_A=X^m-\ind \ker \psi^m_B=X^m-\ind J^m.
\]
Now, $\delta$ restricts to a coaction $(I,q_I)$,
and by surjectivity of $q^\tau$ we have
\[
I^\tau=q^\tau_B(I^m),
\]
and similarly $J^\tau=q^\tau_B(J^m)$.
Combining, we get
\begin{align*}
I^\tau
&=q^\tau_A(I^m)
\\&=q^\tau_A(X^m-\ind J^m)
\\&=X^\tau-\ind q^\tau_B(J^m)
\\&\hspace{1in}\text{(since $q^\tau_X$ is an imprimitivity-bimodule homomorphism)}
\\&=X^\tau-\ind J^\tau
\\&=X^\tau-\ind \ker \psi^\tau_B
\\&=\ker \psi^\tau_A,
\end{align*}
finishing the proof.
\end{proof}

\begin{proof}[Proof of \thmref{intersect}]
Let $E$ and $F$ be exact large ideals.
By \cite[Corollary~6.9]{klqfunctor} we have $E\cap F=\<EF\>$,
and by \lemref{E ideal property} the coaction functor $\tau_{\<EF\>}$ has the ideal property.
Further, by \cite[Proposition~6.10]{klqfunctor} $\tau_{\<EF\>}$ is Morita compatible.
The conclusion now follows from \lemref{w-proper}, \propref{morita preserves}, and \lemref{morita}.
\end{proof}

\begin{rem}
The technique of proof of \cite[Theorem~4.22]{klqfunctor} shows that
the greatest lower bound of any collection of exact coaction functors is exact. Thus, it might seem that \thmref{intersect} above implies that the intersection $E$ of all exact
large ideals of $B(G)$ is exact. However, it is not clear to us
how to show that $\tau_E$ coincides with the greatest lower bound of $\{\tau_F:\text{$F$ is a exact large ideal}\}$; it is certainly no larger than
this greatest lower bound, but that is all we can prove at this point.
To see what the problem is, let $\{E_i\}$ be the set of exact large ideals of $B(G)$,
so that $E=\bigcap_iE_i$.
The issue is whether, for a given coaction $(A,\delta)$,
the union $\bigcup_iA_{E_i}$ of the upward-direct family of ideals is dense in the ideal $A_E$.
This is true for $(C^*(G),\delta_G)$ since then $A_E=\pann E$
and $\bigcup_i\pann E_i$ is dense in $\pann E$
because
$(\bigcup_i\pann E_i)\ann=\bigcap_i(\pann E_i)\ann=E$.
In the general case,
we have $(A_E)\ann=\clspn\{EA^*\}$ (the weak*-closure of the linear span of products, where $E$ acts on the dual space $A^*$ in the natural way).
Obviously
$EA^*
\subset \bigcap_i\clspn\{E_iA^*\}$,
but we cannot see a reason to expect $\spn \{EA^*\}$ to be weak*-dense in this intersection.
\end{rem}

\begin{rem}
\cite[Subsection~9.2 Question~(1)]{BusEchWil} asks whether, for every exact group $G$
and all $p\in [2,\infty)$,
the crossed-product functor $\rtimes_{E_p}$ is exact,
where $E_p$ is the weak*-closure of $B(G)\cap L^p(G)$ (which should be changed to $\spn\{P(G)\cap L^p(G)\}$, as in the discussion preceding \lemref{morita} of the current paper and in \cite[Proposition~2.13]{bew2}).
We know that if $G$ is a free group $\F_n$ with $n>1$,
then for $2\le p<\infty$ the coaction functor $\tau_{E_p}$ is not exact.
Of course, $\F_n$ is exact.
We think we might be able to deduce that $\rtimes_{E_p}$ is not exact.
Note that this is nontrivial: if we compose a coaction functor $\tau$ with the full-crossed-product functor $\cp$ that takes an action $(B,\alpha)$ to the dual coaction $(B\rtimes_\alpha G,\what\alpha)$,
we get a crossed-product functor
$\mu_\tau:=\tau\circ \cp$
that takes $(B,\alpha)$ to the coaction
\[
\bigl((B\rtimes_\alpha G)^\tau,(\what\alpha)^\tau\bigr).
\]
By \cite[Proposition~4.24]{klqfunctor},
if $\tau$ 
is exact or Morita compatible
then so is $\mu_\tau$.
But to give a negative answer to the \cite{BusEchWil} question we would be trying to draw a conclusion that goes in the ``wrong direction''.
\end{rem}

\begin{ex}\label{no E}
\cite[Question~6.20]{klqfunctor} asks whether for every coaction functor $\tau$
there necessarily exists a large ideal $E$ of $B(G)$ such that
the restrictions of $\tau$ and $\tau_E$ to the subcategory of maximal coactions (but still taking values in the ambient category of coactions)
are naturally isomorphic.
Borrowing a trick from Buss, Echterhoff, and Willett,
we give a negative answer.
We adapt a construction from \cite[Section~2.5 and Example~3.5]{BusEchWil}.
Let $\RR$ be a collection of coactions.
For each coaction $(A,\delta)$ let
$\RR_{A,\delta}$ be the collection of all triples $(B,\epsilon,\phi)$,
where either
$(B,\epsilon)\in\RR$ and $\phi:(A,\delta)\to (B,\epsilon)$ is a morphism in $\co$
or
$(B,\epsilon)=(A^n,\delta^n)$ and $\epsilon:(A,\delta)\to (A^n,\delta^n)$ is the normalization 
surjection.
Then let
\[
\left(\bigoplus_{(B,\epsilon,\phi)\in\RR_{A,\delta}}(B,\epsilon),
\bigoplus_{(B,\epsilon,\phi)\in\RR_{A,\delta}}\epsilon\right)
\]
be the direct-sum coaction.
We can form the direct sum
\[
Q^\RR_A:=
\bigoplus_{(B,\epsilon,\phi)\in\RR_{A,\delta}}\phi:
A\to M\left(\bigoplus_{(B,\epsilon,\phi)\in\RR_{A,\delta}}B\right),
\]
which is a nondegenerate
\[
\delta-\bigoplus_{(B,\epsilon,\phi)\in\RR_{A,\delta}}\epsilon
\]
equivariant homomorphism.
Let $A^\RR$ be the image of $A$ under this direct-sum homomorphism $Q^\RR_A$.
Then by the elementary \lemref{image} below there is a unique coaction $\delta^\RR$ of $G$ on $A^\RR$
such that $Q^\RR_A$ is $\delta-\delta^\RR$ equivariant.

Claim: for every morphism $\phi:(A,\delta)\to (B,\epsilon)$ in $\co$,
there is a unique morphism $\phi^\RR$ in $\co$ making the diagram
\[
\xymatrix@C+20pt{
(A,\delta) \ar[r]^-\phi \ar[d]_{Q^\RR_A}
&(B,\epsilon) \ar[d]^{Q^\RR_B}
\\
(A^\RR,\delta^\RR) \ar@{-->}[r]_-{\phi^\RR}
&(B^\RR,\epsilon^\RR)
}
\]
commute.
We have
\[
\ker Q^\RR_A=\bigcap_{(C,\eta,\psi)\in\RR_{A,\delta}}\ker\psi,
\]
while
\[
\ker Q^\RR_B\circ\phi
=\bigcap_{(C,\eta,\psi)\in\RR_{B,\epsilon}}\ker\psi\circ\phi.
\]
Since
\[
\{\psi\circ\phi:(C,\eta,\psi)\in\RR_{B,\epsilon}\}\subset \RR_{A,\delta},
\]
we have
\[
\ker Q^\RR_A\subset \ker Q^\RR_B\circ\phi,
\]
and the claim follows.

Uniqueness of the maps $\phi^\RR$ and surjectivity of the maps $Q^\RR_A$
implies that there is a unique decreasing coaction functor $\tau_\RR$ such that
\[
(A^{\tau_\RR},\delta^{\tau_\RR})=(A^\RR,\delta^\RR)
\]
and $\phi^{\tau_\RR}=\phi^\RR$
(see \cite[Definition~5.1 and Lemma~5.2]{klqfunctor}).

We will show that,
whenever $G$ is nonamenable,
there is a suitable choice of $\RR$ for which the coaction functor $\tau_\RR$ is not Morita compatible, and therefore its restriction to the maximal coactions is not naturally isomorphic to $\tau_E$ for any large ideal $E$ of $B(G)$.
Let
\[
(A,\delta)=\bigl(C[0,1)\otimes C^*(G),\id\otimes\delta_G\bigr).
\]
We let
\[
\RR=\{(A,\delta)\}.
\]
The coactions $(A,\delta)$ and $(\KK\otimes A,\id\otimes\delta)$ are Morita equivalent.
We claim that
$Q^\RR_A$ is faithful but
$Q^\RR_{\KK\otimes A}$ is not.
Since the coaction functor $\tau_\RR$ is decreasing, it will follow that $\tau_\RR$ is not Morita compatible.

The triple $(A,\delta,\id)$ is in the collection $\RR_{A,\delta}$, which implies that $Q^\RR_A$ is faithful.
On the other hand,
we claim that the only morphism in the collection $\RR_{\KK\otimes A,\id\otimes\delta}$ is the normalization
\[
\id_{\KK\otimes C[0,1)}\otimes\lambda:\KK\otimes C[0,1)\otimes C^*(G)
\to
\KK\otimes C[0,1)\otimes C^*_r(G).
\]
Since $G$ is nonamenable, this normalization is not faithful.
To verify the claim,
it will suffice to show that there are no nonzero
homomorphisms
from
$\KK\otimes A$
to
$A$.
Any such homomorphism would be of the form
$\psi_1\times\psi_2$, where $\psi_1$ and $\psi_2$ are commuting homomorphisms from $\KK$ and $A$, respectively, to $A$, i.e.,
\[
(\psi_1\times\psi_2)(k\otimes a)=\psi_1(k)\psi_2(a).
\]
Since $A$ has no nonzero projections, the homomorphism $\psi_1$ must be 0, and so $\psi_1\times\psi_2=0$.
\end{ex}

In \exref{no E}, we used the following lemma, which is presumably folklore.
Since we could not find it in the literature we include the proof.

\begin{lem}\label{image}
Let $(A,\delta)$ and $(B,\epsilon)$ be coactions,
and let $\phi:A\to M(B)$ be a $\delta-\epsilon$ equivariant homomorphism.
Let $C=\phi(A)\subset M(B)$.
Then there is a unique coaction $\eta$ of $G$ on $C$ such that $\phi:A\to C$ is $\delta-\eta$ equivariant.
\end{lem}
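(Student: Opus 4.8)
The plan is to let $\eta$ be the restriction to $C$ of the canonical extension $\bar\epsilon\colon M(B)\to M(B\otimes C^*(G))$ of $\epsilon$, and then verify that this restriction is a coaction on $C$ for which $\phi$ is equivariant. Everything will flow from the equivariance hypothesis, which I would use in the concrete form
\[
\epsilon(\phi(a))(1\otimes x)=(\phi\otimes\id)\bigl(\delta(a)(1\otimes x)\bigr)\qquad(a\in A,\ x\in C^*(G)),
\]
together with two simple facts: that $\phi\colon A\to C$ is surjective (the image of a $*$-homomorphism is a $C^*$-subalgebra, so $C=\phi(A)$), and that $\delta$ is coaction-nondegenerate, i.e. $\clspn\{\delta(A)(1\otimes C^*(G))\}=A\otimes C^*(G)$.

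The heart of the argument, and the step I expect to require the most care, is to see that $\bar\epsilon$ actually carries $C$ into $M(C\otimes C^*(G))$ and that the resulting map satisfies coaction-nondegeneracy. First I would note that, for $a\in A$ and $x\in C^*(G)$, the element $\delta(a)(1\otimes x)$ lies in $A\otimes C^*(G)$, and $\phi\otimes\id$ carries $A\otimes C^*(G)$ onto $\phi(A)\otimes C^*(G)=C\otimes C^*(G)$; so by the displayed equivariance identity $\bar\epsilon(\phi(a))(1\otimes x)\in C\otimes C^*(G)$, and taking adjoints gives $(1\otimes x)\bar\epsilon(\phi(a))\in C\otimes C^*(G)$ as well. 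Applying the surjection $\phi\otimes\id$ to the coaction-nondegeneracy equality for $\delta$ then yields
\[
\clspn\{\bar\epsilon(C)(1\otimes C^*(G))\}=(\phi\otimes\id)\bigl(\clspn\{\delta(A)(1\otimes C^*(G))\}\bigr)=C\otimes C^*(G),
\]
which is precisely coaction-nondegeneracy for $\eta$. This same identity shows that each $\bar\epsilon(c)$ multiplies $C\otimes C^*(G)$ into itself on both sides: writing a general element of $C\otimes C^*(G)$ as a norm-limit of sums $\bar\epsilon(c_i)(1\otimes x_i)$ and using $\bar\epsilon(c)\bar\epsilon(c_i)=\bar\epsilon(cc_i)$. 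Hence $\eta(c):=\bar\epsilon(c)$ is a well-defined homomorphism $\eta\colon C\to M(C\otimes C^*(G))$.

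It then remains to check the two remaining coaction axioms, both of which are inherited. Injectivity of $\eta$ comes from injectivity of $\bar\epsilon$: since $\epsilon$ is injective and nondegenerate, $\bar\epsilon(m)=0$ gives $\epsilon(mb)=\bar\epsilon(m)\epsilon(b)=0$ for all $b\in B$, so $mb=0$ for all $b$ and thus $m=0$. The coaction identity $(\eta\otimes\id)\circ\eta=(\id\otimes\delta_G)\circ\eta$ is just the restriction to $C$ of the corresponding identity for $\epsilon$ (extended strictly to $M(B)$). Equivariance of $\phi\colon A\to C$ holds by construction, since $\eta\circ\phi=\bar\epsilon\circ\phi=(\phi\otimes\id)\circ\delta$.

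Finally, uniqueness is immediate: any coaction $\eta'$ on $C$ making $\phi$ equivariant satisfies $\eta'\circ\phi=(\phi\otimes\id)\circ\delta=\eta\circ\phi$, and since $\phi\colon A\to C$ is surjective this forces $\eta'=\eta$. The only genuinely delicate point in the whole argument is the multiplier bookkeeping of the second paragraph, namely confirming that $\bar\epsilon$ lands in $M(C\otimes C^*(G))$ and is coaction-nondegenerate there; the rest is routine restriction of structure already present for $\epsilon$.
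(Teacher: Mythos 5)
Your proof is correct, but it takes a different route from the paper. The paper does not construct $\eta$ by hand: it invokes \cite[Corollary~1.7]{fullred}, which reduces the whole lemma to showing that $C$ is a nondegenerate $A(G)$-submodule of $M(B)$, and then verifies that single condition by extending the $B(G)$-module characterization of equivariance (\cite[Proposition~A.1]{klqfunctor}) to homomorphisms $\phi:A\to M(B)$, obtaining $A(G)\cdot\phi(A)=\phi(A(G)\cdot A)$ and concluding from nondegeneracy of the $A(G)$-module $A$. You instead define $\eta$ directly as the restriction of $\bar\epsilon$ to $C$ and check the coaction axioms yourself; this is more self-contained (it does not outsource the multiplier bookkeeping to the cited corollary) and it makes explicit exactly where equivariance and coaction-nondegeneracy of $\delta$ enter, at the cost of some details you defer as ``routine'' --- most notably the comultiplication identity $(\eta\otimes\id)\circ\eta=(\id\otimes\delta_G)\circ\eta$, which requires comparing the strict extension of $\eta\otimes\id$ over $M(C\otimes C^*(G))$ with that of $\bar\epsilon\otimes\id$ over $M(B\otimes C^*(G))$ by multiplying against $C\otimes C^*(G)\otimes C^*(G)$, and the distinction between injectivity of $\bar\epsilon$ on $C$ and injectivity of the induced map into $M(C\otimes C^*(G))$. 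These are exactly the points the cited corollary is designed to handle once and for all, which is what the paper's shorter argument buys; both constructions of course produce the same coaction, namely the restriction of $\bar\epsilon$. Your uniqueness argument via surjectivity of $\phi:A\to C$ is the right one (the paper leaves it implicit).
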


\begin{proof}
By \cite[Corollary~1.7]{fullred}, it suffices to show that $C$ is a nondegenerate $A(G)$-submodule of $M(B)$.

Now, \cite[Proposition~A.1]{klqfunctor} says that a homomorphism from $A$ to $B$ is $\delta-\epsilon$ equivariant if and only if it is a $B(G)$-module map.
We need a slight extension of this, namely the case of homomorphisms $\phi:A\to M(B)$.
The argument of \cite[Proposition~A.1]{klqfunctor} carries over, with the minor adjustment that in the second line of the multiline displayed computation 
the map $\phi\otimes\id$ must be replaced by the canonical extension
\[
\bar{\phi\otimes\id}:\wilde M(A\otimes C^*(G))\to M(B\otimes C^*(G)),
\]
which exists by \cite[Proposition~A.6]{enchilada}.
Thus, since we are assuming $\delta-\epsilon$ equivariance,
we can conclude that
\[
A(G)\cdot \phi(A)=\phi\bigl(A(G)\cdot A\bigr).
\]
Since $A$ is a nondegenerate $A(G)$-module, we are done.
\end{proof}


\providecommand{\bysame}{\leavevmode\hbox to3em{\hrulefill}\thinspace}
\providecommand{\MR}{\relax\ifhmode\unskip\space\fi MR }
\providecommand{\MRhref}[2]{%
  \href{http://www.ams.org/mathscinet-getitem?mr=#1}{#2}
}
\providecommand{\href}[2]{#2}

\end{document}